\newtheorem{thm}{Theorem}[section]
\newtheorem{cor}[thm]{Corollary}
\newtheorem{lem}[thm]{Lemma}
\newtheorem{rem}[thm]{\bf{Remark}}
\numberwithin{equation}{section}
\newcommand{\beq}{\begin{eqnarray}}
\newcommand{\eeq}{\end{eqnarray}}
\newcommand{\beqs}{\begin{eqnarray*}}
\newcommand{\eeqs}{\end{eqnarray*}}
\title{\bf  First and Second Maximum of Randi\'{c}  Index Among all $k-$Cyclic Graphs of a Given Order}
\author{ \bf Ali Reza Ashrafi\thanks{Corresponding author (ashrafi@kashanu.ac.ir)},  Ali Ghalavand  and Marzieh Pourbabaee
}
\affil{ \normalsize
    { \it Department of Pure Mathematics, Faculty of Mathematical Sciences, University of Kashan, Kashan 87317--53153, I. R. Iran}}
\begin{document}

\maketitle

\begin{abstract}
Suppose $G$ is a simple graph with  edge set $E(G)$. The Randi\'{c} index $R(G)$ is defined as $R(G)=\sum_{uv\in E(G)}\frac{1}{\sqrt{deg_{G}(u)deg_{G}(v)}}$, where $deg_G(u)$ denotes the vertex degree of  $u$ in $G$. In this paper, the first and second maximum of Randi\'{c}  index  among all $n-$vertex  $k-$cyclic graphs were computed.

\vskip 3mm

\noindent{\bf Keywords:}  $k$-Cyclic graph, Randi\'{c} index, graph transformation.

\vskip 3mm

\noindent{\it 2010 AMS  Subject Classification Number:} $05C15$.

\end{abstract}

\bigskip

\section{\bf Definitions and Notations}

In this section, we first describe some mathematical notions that will be kept throughout. All graphs considered in this paper are assumed to be  simple, undirected and finite without multiple edges. The undefined terms and notations are from \cite{g2,l}.

The degree of a vertex $v$ in $G$ is denoted by  $deg_G{(v)}$ and $N[v,G]$ is the set of all vertices adjacent to $v$. The notations $\Delta=\Delta(G)$ and $n_i = n_i(G)$ are  used for the maximum degree  and the number of vertices of degree $i$ in $G$, respectively.  The number of edges connecting a vertex of degree $i$ with a vertex of degree $j$ in $G$ is denoted by $m_{i,j}(G)$. An connected $n-$vertex graph $G$ is called to be $c$-cyclic if it has $n + c - 1$ edges. The number $c = c(G)$ is said to be the cyclomatic number of $G$.

Suppose $W$ is a non-empty subset of vertices in a graph $G$. The subgraph of $G$ obtained by deleting the vertices of $W$ is denoted by $G - W$ and similarly, if $F \subseteq E(G)$, then the subgraph obtained by deleting all edges in $F$ is denoted by $G - F$. In the case that $W = \{ v \}$ or $F = \{ xy \}$, the subgraphs $G-W$ and $G-F$ will shortly be written as $G - v$ or $G - xy$, respectively. Furthermore, if $x$ and $y$ are nonadjacent vertices in $G$, then the notation $G + xy$ is used for  the graph obtained from $G$ by adding an edge $xy$.

The Randi\'{c} index of a graph $G$ is defined as $R(G)=\sum_{uv\in E(G)}\frac{1}{\sqrt{deg_{G}(u)deg_{G}(v)}}$ \cite{r}. The most important mathematical properties of this number were presented in \cite{g2,l}. In the following, we first briefly review the literature on ordering graphs with Randi\'{c} index.

In \cite{de1,de2}, the first and second maximum of Randi\'c index in the class of all $n-$vertex $c$-cyclic graphs, $c = 3, 4$, were obtained.  Shiu and Zhang \cite{s} obtained the maximum value of Randi\'c index  in the class of all $n-$vertex chemical trees with $k$ pendents such that  $n < 3k - 2$.  Shi \cite{s2}  obtained some interesting results for chemical trees with respect to two generalizations of Randi\'c index. For related results we refer to the survey article  of Li and Shi \cite{l2} on the topic of Randi\'c index.

Deng et al. \cite{de}  considered various degree mean rates of an edge and gave some tight bounds for the variation of the Randi\'c index of a graph $G$ in terms of its maximum and minimum degree mean rates over its edges.

\section{\bf Five Graph Transformations}

In this section five graph transformations will be presented which are useful in computing Randi\'c index of graphs. The Transformations I and II were introduced  in \cite{ga1}.

\begin{enumerate}
\item \textbf{Transformation I.} Suppose that $G$ is a graph with a given vertex $w$ such that $deg_{G}(w)\geq1$. In addition, we assume that $P:=v_1v_2\ldots v_k$ and $Q:=u_1u_2\ldots u_l$ are two paths of lengths $k$ and $l$, respectively. Let $G_1$ be the graph obtained from $G$, $P$ and $Q$  by attaching edges $v_1w$ and $wu_1$. Define $G_2=G_1-v_1w +u_kv_1$.

\item \textbf{Transformation II.} Suppose that $G$ is a graph with given vertices $x$ and $y$ such that $deg_{G}(x),deg_{G}(y)\geq 2$ and for all $v\in N[x,G]$, $deg_{G}(x)\geq2$. In addition, we assume that $P:=v_1v_2...v_l$ and $Q:=u_1u_2...u_k$ are two paths of lengths $l$ and $k$, respectively. Define $G_1$ to be the graph obtained from $G$, $P$ and $Q$  by attaching vertices $yv_1$, $u_1x$, and $G_2=G_1-u_1x +v_lu_1$.

\item \textbf{Transformation III.}  Suppose that $G$ is a graph with  vertices $x, y, w$ and $z$ such that $\{xy,wz\}\subseteq E(G)$.  In addition, we assume that $G^\prime$ is a  trivial graph with vertex set $\{ v\}$. Define $G_1=G-\{xy\}+\{xv,vy\}$ and $G_2=G-\{wz\}+\{wv,vz\}$.

\item \textbf{Transformation IV.}  Suppose that $G$ is a graph with vertices $v_1, v_2, v_3, v_4$ and $u_1$ such that $\{v_1v_2, v_2v_3, v_4u_1\}\subseteq E(G)$, $deg_G(v_1)\geq2$,  $deg_G(v_2)=2$, $deg_G(v_3)\geq3$ and $deg_G(v_4)\geq4$. Define $G^\prime=G-\{v_4u_1\}+\{v_2u_1\}$.

\item \textbf{Transformation V.}  Suppose that $G$ is a graph with  vertices $x_1, x_2, x_3, x_4, x_5, x_6$ and $w$ such that $\{x_1x_2,x_2x_3,x_2w,x_4x_5,x_5x_6\}\subseteq E(G)$, $deg_G(x_1)=deg_G(x_2)=deg_G(x_3)=3$, $deg_G(x_4)=4$, $deg_G(x_5)=2$ and $deg_G(x_6)=1$ or $2$. Define $G^\prime=G-\{x_2w\}+\{x_5w\}$.
\end{enumerate}

It is well-known that if  the derivative $f^\prime(x)$ of a continuous function $f(x)$ satisfies $f^\prime(x)>0$ on an open interval $(a,b)$, then $f(x)$ is increasing on $(a,b)$.

\begin{lem} \label{th0} The following hold:
\begin{enumerate}
\item Let $G_1$ and $G_2$ be two graphs satisfying the conditions of Transformation $I$. Then  $R(G_2) > R(G_1)$.

\item Let $G_1$ and $G_2$ be two graphs satisfying the conditions of Transformation $II$. Then $R(G_2)>R(G_1)$.

\item Let $G_1$ and $G_2$ be two graphs as shown in Transformation $III$.
\begin{enumerate}
\item If $deg_G(x),deg_G(y)\geq3$ and $deg_G(z)\in\{1,2\}$, then $R(G_2)>R(G_1)$.

\item If $deg_G(x)=2$,  $deg_{G}(w)\geq3$ and $deg_G(z)=1$, then $R(G_2)>R(G_1)$.

\item If $deg_G(x)=2$ and $deg_G(w)=2$, then $R(G_2)=R(G_1)$.
\end{enumerate}

\item Let $G$ and $G^\prime$ be two graphs satisfying the conditions of Transformation $IV$, $\Delta(G)=q$, $N[v_4, G] = \{u_1,\ldots,u_q\}$ and $deg_G(u_i) = d_i$, $1 \leq i \leq q$. If $d_1,d_2\leq3$, then  $R(G^\prime)\geq R(G)$, with equality if and only if $deg_G(v_1)=2$,  $deg_G(v_3)=d_1=d_2=3$ and $q=d_3=d_4=4$.

\item Let $G$ and $G^\prime$ be two graphs satisfying the conditions of Transformation $V$. Then  $R(G^\prime)< R(G)$.
\end{enumerate}

\end{lem}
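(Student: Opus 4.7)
The unifying theme across the five parts is that every transformation changes the degree of at most three vertices and modifies only a bounded number of edges, so $R(G_2)-R(G_1)$ (respectively $R(G')-R(G)$) collapses to an explicit algebraic expression in finitely many integer parameters, and each item reduces to controlling the sign of that expression. My plan is to treat the five items in order of increasing combinatorial complexity.

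Part~3 is the cleanest. Inserting a degree-$2$ vertex $v$ into an edge $ab$ leaves every vertex-degree of $G$ unchanged and simply replaces the term $\frac{1}{\sqrt{\deg(a)\deg(b)}}$ by $\frac{1}{\sqrt{2\deg(a)}}+\frac{1}{\sqrt{2\deg(b)}}$. Hence in every sub-case
\[
R(G_2)-R(G_1)=\varphi(\deg(w),\deg(z))-\varphi(\deg(x),\deg(y)),\qquad \varphi(s,t):=\tfrac{1}{\sqrt{2s}}+\tfrac{1}{\sqrt{2t}}-\tfrac{1}{\sqrt{st}},
\]
and the three assertions follow from a finite check of $\varphi$ on the allowed integer pairs; in particular $\varphi(2,t)=\frac{1}{2}$ for every $t\geq1$, which yields equality in~(c). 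For Parts~1 and~2, I would split $R(G_2)-R(G_1)$ into a negative contribution from the relocated edge $v_1w$ (respectively $u_1x$) and a positive contribution from each edge that survives at $w$ (respectively at $x$), since those edges now see one unit less of degree and $t\mapsto t^{-1/2}$ is strictly decreasing. A direct estimate shows that the sum of the positive contributions, which collects $\deg_G(w)+1$ terms of the form $\frac{1}{\sqrt{d}}-\frac{1}{\sqrt{d+1}}$, strictly dominates the single negative term; two short sub-cases (whether $v_1$ or $u_k$ is a degree-$1$ endpoint of its path) are required but give the same sign.

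The main obstacle is Part~4. Four classes of edges change contribution: $v_1v_2$ and $v_2v_3$ are re-weighted because $\deg(v_2)$ moves from $2$ to $3$; the edge $v_4u_1$ is replaced by $v_2u_1$; and every remaining $v_4u_j$ with $j\geq 2$ is re-weighted because $\deg(v_4)$ drops from $q$ to $q-1$. Writing $a=\deg(v_1)$ and $c=\deg(v_3)$, this yields
\[
R(G')-R(G)=\Bigl(\tfrac{1}{\sqrt{3a}}-\tfrac{1}{\sqrt{2a}}\Bigr)+\Bigl(\tfrac{1}{\sqrt{3c}}-\tfrac{1}{\sqrt{2c}}\Bigr)+\Bigl(\tfrac{1}{\sqrt{3d_1}}-\tfrac{1}{\sqrt{qd_1}}\Bigr)+\sum_{j=2}^{q}\Bigl(\tfrac{1}{\sqrt{(q-1)d_j}}-\tfrac{1}{\sqrt{qd_j}}\Bigr),
\]
with the first two terms negative and bounded in absolute value by quantities depending only on $a,c$, and the remainder positive. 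The delicate step is to show that the positive tail dominates for every admissible tuple: I would use $d_1,d_2\leq 3$ to lower-bound the $j=1$ and $j=2$ summands, and the estimate $\sqrt{q}-\sqrt{q-1}\geq\frac{1}{2\sqrt{q}}$ together with $q\geq 4$ to handle the $j\geq 3$ summands, then track where each bound is tight to identify the unique equality configuration $a=2$, $c=d_1=d_2=3$, $q=d_3=d_4=4$. Part~5 is then a finite numerical check: the hypotheses pin every relevant degree except $\deg(w)$, which cancels out of the difference because both $x_2w$ and $x_5w$ are degree-$3$-to-$\deg(w)$ edges in their respective graphs, and $\deg(x_6)\in\{1,2\}$, so $R(G)-R(G')$ reduces to an explicit numerical expression whose positivity is verified by substituting the two possible values of $\deg(x_6)$.
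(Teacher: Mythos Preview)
Your approach is essentially the same as the paper's: in each item you compute $R(G_2)-R(G_1)$ (or $R(G')-R(G)$) as an explicit finite sum coming from the handful of edges whose endpoint degrees change, and then control its sign via the monotonicity of $t\mapsto t^{-1/2}$ together with the integer constraints on the relevant degrees. Your packaging of Part~3 via the single function $\varphi(s,t)=\tfrac{1}{\sqrt{2s}}+\tfrac{1}{\sqrt{2t}}-\tfrac{1}{\sqrt{st}}$ is a clean reformulation of exactly the computation the paper carries out term by term, and your decomposition and bounding strategy in Part~4 matches the paper's line for line; just note that in Part~3(a) the parameters are not all bounded, so ``finite check'' should be read as ``reduce to extremal values by monotonicity and then check,'' which is precisely what the paper does.
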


\begin{proof}
\begin{enumerate}
\item Let $deg_{G}(w)=x$ and $k,l\geq2$. Then by the proof of \cite[Lemma 3.1]{ga1},
\begin{eqnarray}\label{eq1}
R(G_2)-R(G_1)&>&(\frac{1}{\sqrt{2(x+1)}}+1)-(\frac{2}{\sqrt{2(x+2)}}+\frac{1}{\sqrt{2}}).
\end{eqnarray}
Let $f(x):=\frac{1}{\sqrt{2(x+1)}}-\frac{2}{\sqrt{2(x+2)}}$, for $x\in (0.8,\infty)$. Since $f(x)$ is continuous on the open interval  $(0.8,\infty)$ and $f^\prime(x)>0$ on this interval,  $f$ is increasing on  $(0.8,\infty)$.
Therefore, by Equation \ref{eq1}, $R(G_2)-R(G_1)>0.01$. The proof of other cases of $k$ and $l$ are similar and we omit them.

\item Let $deg_{G}(x)=s$ and  $k,l\geq2$. Then by the proof of \cite[Lemma 3.4]{ga1},
\begin{eqnarray}
R(G_2)-R(G_1)&>&(1+\frac{s}{\sqrt{2s}})-(\frac{s+1}{\sqrt{2(s+1)}}+\frac{1}{\sqrt{2}})\nonumber\\
&=&1-\frac{1}{\sqrt{2}}+\frac{1}{\sqrt{2}}(\sqrt{s}-\sqrt{s+1})\label{eq2}.
\end{eqnarray}
Let $g(x):=\sqrt{x}-\sqrt{x+1}$, for $x\in (0,\infty)$. Then  $g$ is increasing on  $(0,\infty)$ and so by Equation \ref{eq2}, $R(G_2)-R(G_1)\geq6\times10^{-10}$. The proof of other cases of $k$ and $l$ are similar and we omit them.

\item Suppose $deg_{G}(x)=s$, $deg_{G}(y)=r$, $deg_{G}(w)=l$ and  $deg_{G}(z)=t$. To prove $(a)$, we note that
\begin{eqnarray*}
R(G_2)-R(G_1)&=&(\frac{1}{\sqrt{sr}}+\frac{1}{\sqrt{2l}}+\frac{1}{\sqrt{2t}})-(\frac{1}{\sqrt{2s}}+\frac{1}{\sqrt{2r}}+\frac{1}{\sqrt{lt}})\\
&=&(\frac{1}{\sqrt{sr}}-\frac{1}{\sqrt{2s}}-\frac{1}{\sqrt{2r}})+(\frac{1}{\sqrt{2l}}-\frac{1}{\sqrt{2t}}-\frac{1}{\sqrt{lt}})\\
&>&-0.48+0.50>0.
\end{eqnarray*}
To prove $(b)$, we first calculate the difference between $R(G_2)$ and $R(G_1)$.
\begin{eqnarray}
R(G_2)-R(G_1)&=&(\frac{1}{\sqrt{2}}+\frac{1}{\sqrt{2l}})-(\frac{1}{2}+\frac{1}{\sqrt{l}}). \label{eq3}
\end{eqnarray}
Let $h(x)=\frac{1}{\sqrt{2x}}-\frac{1}{\sqrt{x}}$, for $x\in (0,\infty)$. Then again $h$ is increasing on  $(0,\infty)$ and hence  by Equation \ref{eq3}, $R(G_2)-R(G_1)>0.038$. For the proof of $(c)$, it is enough to notice that  $m_{i,j}(G_1)=m_{i,j}(G_2)$, $1\leq i\leq j\leq n-1$. Thus $R(G_2)=R(G_1)$, as desired.

\item Suppose that $deg_G(v_1) =s$, $deg_G(v_3) = r$ and $deg_G(v_4) = q$. Then,
\begin{eqnarray}
R(G^\prime)-R(G)&=&\left(\frac{1}{\sqrt{3s}}+\frac{1}{\sqrt{3r}}+\frac{1}{\sqrt{3d_1}}+
\frac{1}{\sqrt{(q-1)d_2}}+\sum_{i=3}^q\frac{1}{\sqrt{(q-1)d_i}}\right)\nonumber\\
&-&\left(\frac{1}{\sqrt{2s}}+\frac{1}{\sqrt{2r}}+\frac{1}{\sqrt{qd_1}}+\frac{1}{\sqrt{qd_2}}+
\sum_{i=3}^q\frac{1}{\sqrt{qd_i}}\right)\nonumber\\
&=&\left[\frac{1}{\sqrt{3s}}-\frac{1}{\sqrt{2s}}\right]+\left[\frac{1}{\sqrt{3r}}-\frac{1}{\sqrt{2r}}\right]-
\left[\frac{1}{\sqrt{qd_1}}-\frac{1}{\sqrt{3d_1}}\right]\label{equ4}\\
&-&\left[\frac{1}{\sqrt{qd_2}}-\frac{1}{\sqrt{(q-1)d_2}}\right]-
\sum_{i=3}^q\left[\frac{1}{\sqrt{qd_i}}-\frac{1}{\sqrt{(q-1)d_i}}\right].\nonumber
\end{eqnarray}
Let $f(x):=\frac{1}{\sqrt{ax}}-\frac{1}{\sqrt{bx}}$,  $x\in (0,\infty)$ and $1\leq b< a$. Then $f$ is increasing on $(0,\infty)$ and therefore by Equation \ref{equ4},
\begin{eqnarray*}
R(G^\prime)-R(G)&\geq&[\frac{1}{\sqrt{6}}-\frac{1}{\sqrt{4}}]+[\frac{1}{\sqrt{9}}-\frac{1}{\sqrt{6}}]-[\frac{1}{\sqrt{3q}}-\frac{1}{\sqrt{9}}]\\
&-&[\frac{1}{\sqrt{3q}}-\frac{1}{\sqrt{3(q-1)}}]
-[\frac{q-2}{\sqrt{q^2}}-\frac{q-2}{\sqrt{(q-1)q}}]\\
&=&\frac{1}{{6}}-\frac{2}{{3}}\frac{\sqrt{3}}{\sqrt{q}}+\frac{1}{\sqrt{3q-3}}-\frac{q-2}{\sqrt{q^2}}+\frac{q-2}{\sqrt{(q-1)q}}\geq0,
\end{eqnarray*}
with equality if and only if $deg_G(v_1)=2$,  $deg_G(v_3)=d_1=d_2=3$ and $q=d_3=d_4=4$.

\item Suppose that $deg_G(x_6) =r$. Then by definition,
\begin{eqnarray*}
R(G^\prime)-R(G)&=&(\frac{2}{\sqrt{9}}+\frac{1}{\sqrt{8}}+\frac{1}{\sqrt{2r}})-
(\frac{2}{\sqrt{6}}+\frac{1}{\sqrt{12}}+\frac{1}{\sqrt{3r}})>0.0068.
\end{eqnarray*}
\end{enumerate}
Hence the result.
\end{proof}


\begin{lem}
{\rm (See \cite{Gh2}\rm )} If $G$ is a  connected graph with $n$ vertices and cyclomatic number $\gamma$, then
$n_{1}(G)=2-2\gamma +\sum_{i=3}^{\Delta(G)} (i-2)n_i$ and $n_{2}(G) =2\gamma+ n-2-\sum_{i=3}^{\Delta(G)}(i-1)n_i.$
\end{lem}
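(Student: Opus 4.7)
The plan is to derive both identities from two very elementary counting equations: the vertex partition by degree and the handshake lemma, applied after one uses the defining relation between the edge count and the cyclomatic number.

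First I would record the edge count. Since $G$ is connected with cyclomatic number $\gamma$, by definition $|E(G)| = n + \gamma - 1$. Then I would write down the two master equations:
\begin{equation*}
n_1(G) + n_2(G) + \sum_{i=3}^{\Delta(G)} n_i(G) \;=\; n,
\end{equation*}
\begin{equation*}
n_1(G) + 2 n_2(G) + \sum_{i=3}^{\Delta(G)} i\, n_i(G) \;=\; \sum_{v \in V(G)} \deg_G(v) \;=\; 2|E(G)| \;=\; 2n + 2\gamma - 2,
\end{equation*}
where the first equation partitions the vertex set by degree (noting $\delta(G) \geq 1$ since $G$ is connected with $n \geq 2$, and trivially true for $n=1$) and the second is the handshake lemma combined with the edge count.

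Next I would solve this $2 \times 2$ linear system in the unknowns $n_1(G), n_2(G)$. Subtracting the first equation from the second immediately isolates $n_2(G)$:
\begin{equation*}
n_2(G) \;=\; 2\gamma + n - 2 - \sum_{i=3}^{\Delta(G)} (i-1)\, n_i(G),
\end{equation*}
which is the second claimed formula. Substituting this back into the vertex-count equation (or, equivalently, subtracting the handshake equation from twice the vertex-count equation) yields
\begin{equation*}
n_1(G) \;=\; 2 - 2\gamma + \sum_{i=3}^{\Delta(G)} (i-2)\, n_i(G),
\end{equation*}
which is the first claimed formula.

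There is essentially no obstacle here: the lemma is a direct rearrangement of the handshake identity once the cyclomatic number is used to fix $|E(G)|$. The only thing worth a brief comment is the (trivial) edge case $n \leq 2$, to justify summing only from $i = 3$ upward and to confirm the formulas still hold there.
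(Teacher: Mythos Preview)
Your argument is correct and is exactly the standard derivation: combine the vertex partition $\sum_i n_i=n$ with the handshake identity $\sum_i i\,n_i=2|E(G)|=2(n+\gamma-1)$ and solve the resulting $2\times 2$ linear system for $n_1$ and $n_2$. Note that the paper does not actually give its own proof of this lemma --- it simply cites it from \cite{Gh2} --- so there is nothing to compare against beyond observing that your proof is the natural one and matches what the cited source would contain.
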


\begin{cor}\label{cor1}
Let $G$ be a connected graph with $n$ vertices and cyclomatic number $\gamma$.
\begin{enumerate}
\item If $\gamma=5$, then $n_{1}(G)=\sum_{i=3}^{\Delta(G)} (i-2)n_i -8$ and  $n_{2}(G) = n+8-\sum_{i=3}^{\Delta(G)}(i-1)n_i$.
\item If $\gamma=6$, then $n_{1}(G)=\sum_{i=3}^{\Delta(G)} (i-2)n_i -10$ and  $n_{2}(G) = n+10-\sum_{i=3}^{\Delta(G)}(i-1)n_i$.
\end{enumerate}
\end{cor}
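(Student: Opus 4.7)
The plan is to view this corollary as a direct specialization of the preceding Lemma~2.2, which asserts that for any connected graph $G$ with $n$ vertices and cyclomatic number $\gamma$,
\[
n_{1}(G)=2-2\gamma +\sum_{i=3}^{\Delta(G)} (i-2)n_i \quad \text{and} \quad n_{2}(G) =2\gamma+ n-2-\sum_{i=3}^{\Delta(G)}(i-1)n_i.
\]
Both parts of the corollary are obtained by plugging in the corresponding value of $\gamma$ and simplifying the constant term; no further graph-theoretic input is required.

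For part~(1), I would set $\gamma=5$ in the two identities above. Then $2-2\gamma = -8$ in the formula for $n_1$, so $n_1(G) = \sum_{i=3}^{\Delta(G)}(i-2)n_i - 8$, while $2\gamma - 2 = 8$ in the formula for $n_2$, so $n_2(G) = n + 8 - \sum_{i=3}^{\Delta(G)}(i-1)n_i$. For part~(2), I would substitute $\gamma = 6$, which gives $2-2\gamma = -10$ and $2\gamma - 2 = 10$, yielding the claimed expressions $n_1(G) = \sum_{i=3}^{\Delta(G)}(i-2)n_i - 10$ and $n_2(G) = n + 10 - \sum_{i=3}^{\Delta(G)}(i-1)n_i$.

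The only verification worth noting is the sign and value of the constant contribution from $\gamma$, which is a one-line computation in each case. Since there is no genuine obstacle here beyond this arithmetic, I expect the proof to be a two-line consequence of the lemma.
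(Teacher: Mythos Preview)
Your proposal is correct and matches the paper's approach: the corollary is stated immediately after Lemma~2.2 with no separate proof, being an obvious specialization obtained by substituting $\gamma=5$ and $\gamma=6$ into the general formulas. Your arithmetic for the constants $2-2\gamma$ and $2\gamma-2$ is accurate, and nothing more is needed.
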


\vskip 3mm

Define $\Upsilon_1(n)=\{G \mid  n_{3}=8, n_2=n-8\}$, $\Upsilon_2(n) = \{G \mid n_{1}=1, n_{3}=9, n_2=n-10\}$, $\Upsilon_3(n)$ $=$ $\{G \mid n_{3}=10, n_2=n-10\}$ and $\Upsilon_4(n)$ $=$ $\{G \mid  n_{1}=1, n_{3}=11, n_2=n-12\}$.

\vskip 3mm

\begin{cor}\label{rem2}
Let $G$ be an $n-$vertex connected graph, $n \geq 12$, with cyclomatic number $\gamma$ and $\Delta(G)=3$.
\begin{enumerate}
\item If $n_1(G)=0$, then  $\gamma=5$  if and only if $G\in \Upsilon_1(n)$.
\item If $n_1(G)=1$, then  $\gamma=5$  if and only if $G\in \Upsilon_2(n)$.
\item  If $n_1(G)=0$, then $\gamma=6$  if and only if $G\in \Upsilon_3(n)$.
\item  If $n_1(G)=1$, then $\gamma=6$  if and only if $G\in \Upsilon_4(n)$.
\end{enumerate}
\end{cor}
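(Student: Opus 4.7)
The plan is to apply Corollary \ref{cor1} directly, exploiting the hypothesis $\Delta(G)=3$ to collapse the sums $\sum_{i=3}^{\Delta(G)}(i-2)n_i$ and $\sum_{i=3}^{\Delta(G)}(i-1)n_i$ to the single terms $n_3$ and $2n_3$, respectively. This turns the formulas of Corollary \ref{cor1} into linear equations in $n_3$ (with $n$ and $\gamma$ as parameters), which can be solved outright.

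For the forward direction of each of the four cases, I would substitute the hypothesized values of $n_1$ and $\gamma$ into the corresponding identity from Corollary \ref{cor1}. In case (1), $\gamma=5$ gives $0=n_3-8$, hence $n_3=8$, and then $n_2=n+8-2\cdot 8=n-8$, placing $G$ in $\Upsilon_1(n)$. Cases (2), (3), (4) proceed identically, each producing the exact counts appearing in $\Upsilon_2(n)$, $\Upsilon_3(n)$, $\Upsilon_4(n)$.

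For the converse direction I would use the handshake lemma. If $G\in\Upsilon_j(n)$ for the appropriate $j$, then from the prescribed values of $n_1,n_2,n_3$ one computes
\[
|E(G)|=\tfrac{1}{2}\bigl(n_1+2n_2+3n_3\bigr),
\]
and then $\gamma=|E(G)|-n+1$. A short arithmetic check shows $\gamma=5$ in cases (1) and (2) and $\gamma=6$ in cases (3) and (4); simultaneously, $n_1+n_2+n_3=n$ must hold, which is immediate from the defining equalities of $\Upsilon_j(n)$. (The assumption $n\geq 12$ is what ensures $n_2\geq 0$ in every case, so that $\Upsilon_j(n)$ is consistent with being a graph of order $n$.)

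There is no real obstacle here: the statement is essentially a bookkeeping corollary of Corollary \ref{cor1} once $\Delta(G)=3$ is enforced. The only mild care needed is to verify in the converse that the listed degree counts actually yield an integer, nonnegative edge total producing the claimed cyclomatic number, which amounts to checking the four small arithmetic identities above.
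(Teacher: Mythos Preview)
Your proposal is correct and matches the paper's approach: the paper's proof consists solely of the line ``The proof follows from Corollary \ref{cor1},'' and what you have written is precisely the routine unpacking of that reference under the hypothesis $\Delta(G)=3$. Your additional remark using the handshake lemma for the converse direction is a harmless elaboration of the same idea.
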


\begin{proof}
The proof follows from Corollary \ref{cor1}.
\end{proof}

\begin{lem}
Let $G$ be a connected graph with $n$ vertices, $m$ edges and cyclomatic number $\gamma$.
\begin{enumerate}
\item If $n_1=0$ and $0<n_i< n$ for some $3\leq i\leq n-1$. Then $m_{i,i}(G)\leq n_i(T)-2+\gamma$.
\item If $n_1\geq1$ and $0<n_i< n$ for some $3\leq i\leq n-1$. Then $m_{i,i}(G)\leq n_i(T)-1+\gamma$.
\end{enumerate}
\end{lem}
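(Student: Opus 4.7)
The plan is to analyze $H := G[V_i]$, the subgraph induced by the vertices of degree $i$, so that $|V(H)| = n_i$ and $|E(H)| = m_{i,i}(G)$. Letting $c$ denote the number of connected components of $H$, its cyclomatic number satisfies $\gamma(H) = m_{i,i}(G) - n_i + c$. Because every cycle of $H$ is also a cycle of $G$, one has $\gamma(H) \le \gamma$, and rearranging yields the master inequality
\[
m_{i,i}(G) \le \gamma + n_i - c.
\]

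Part (2) is then immediate: the hypothesis $n_i \ge 1$ forces $c \ge 1$, so $m_{i,i}(G) \le \gamma + n_i - 1$. The same inequality also settles part (1) whenever $c \ge 2$. Thus the only nontrivial situation is part (1) with $c = 1$, i.e.\ $H$ connected, where the bound to be proved is equivalent to $\gamma(H) \le \gamma - 1$.

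To treat this case, I will contract all of $V_i$ to a single super-vertex $v^\star$, producing a connected multigraph $G^\star$ with $1 + (n - n_i)$ vertices and $m - m_{i,i}(G)$ edges; a routine check gives $\gamma(G^\star) = \gamma - \gamma(H)$, so it suffices to verify $\gamma(G^\star) \ge 1$, i.e.\ that $G^\star$ is not a tree. This I plan to establish by a leaf count. Writing $B := V(G) \setminus V_i$, every $b \in B$ satisfies $\deg_{G^\star}(b) = \deg_G(b) \ge 2$ by the hypothesis $n_1(G) = 0$, so no vertex of $B$ can be a leaf of $G^\star$. Consequently $v^\star$ would be the only candidate leaf; but $n_i < n$ forces $|V(G^\star)| \ge 2$, and every tree on at least two vertices has at least two leaves---contradiction. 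Hence $G^\star$ contains a cycle, as required. The main obstacle is really just this final leaf-counting step and the identification of $n_1(G) = 0$ as the precise hypothesis needed to block $G^\star$ from being a tree; the surrounding steps reduce to standard cyclomatic-number bookkeeping.
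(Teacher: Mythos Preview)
Your argument is correct and considerably more complete than the paper's. The paper offers only two sentences: it asserts that when $n_1=0$ one has $\gamma(G-v)\le\gamma(G)-1$ for every $v\in V(G)$, and then says the bound follows from $m\le n-1+\gamma$. The first assertion is false as stated---take two triangles joined by a path of length two and delete the middle vertex $b$ of degree~$2$: here $n_1=0$ but $\gamma(G-b)=\gamma(G)=2$---so the paper's proof is at best an unfinished sketch.

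Your route is genuinely different. Rather than deleting vertices one at a time, you pass directly to $H=G[V_i]$, compare cycle spaces to get $\gamma(H)\le\gamma$, and in the delicate connected case you contract $H$ to a point and run a leaf count on the resulting multigraph $G^\star$. This makes the role of the hypothesis $n_1(G)=0$ completely explicit: it is precisely what forces every vertex of $B$ to retain degree at least~$2$ in $G^\star$, leaving $v^\star$ as the sole candidate leaf and thereby ruling out $\gamma(G^\star)=0$. Your proof also never invokes $i\ge 3$, so it actually establishes the inequality for any $i$ with $0<n_i<n$.
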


\begin{proof}
$(1)$ Since $n_1=0$, $\gamma(G-v)\leq \gamma(G)-1$, for all $v\in V(G)$.  Now the proof follows from this fact that $m\leq n-1+\gamma$. The part $(2)$ is similar.
\end{proof}

Let $n$ be a positive integer. Define:
\begin{eqnarray*}
\Omega_1(n)&:=&\{G\in \Upsilon_1(n): m_{3,3}=11, m_{2,3}=2, m_{2,2}=n-9\},\\
\Omega_2(n)&:=&\{G\in \Upsilon_2(n): m_{3,3}=13, m_{2,3}=1, m_{1,2}=1, m_{2,2}=n-11\},\\
\Omega_3(n)&:=&\{G\in \Upsilon_3(n): m_{3,3}=14, m_{2,3}=2, m_{2,2}=n-11\},\\
\Omega_4(n)&:=&\{G\in \Upsilon_4(n): m_{3,3}=16, m_{2,3}=1, m_{1,2}=1, m_{2,2}=n-13\}.
\end{eqnarray*}
If $G_i\in \Omega_i(n)$ for $1\leq i\leq4$, then $R(G_1)=\frac{1}{2}n- \frac{5-2\sqrt{6}}{6}$, $R(G_2)= \frac{1}{2}n- \frac{7-(\sqrt{6}+3\sqrt{2})}{6}$,
$R(G_3)=\frac{1}{2}n- \frac{5-2\sqrt{6}}{6}$ and $R(G_4)= \frac{1}{2}n- \frac{7-(\sqrt{6}+3\sqrt{2})}{6}$.

\begin{thm}\label{tth1}
The following hold:
\begin{enumerate}
\item Let $G$ be a connected graph with $n\geq 9$ vertices and cyclomatic number 5. Then $R(G)\leq \frac{1}{2}n- \frac{5-2\sqrt{6}}{6}$, with equality if and only if $G\in \Omega_1(n)$.
\item Let $G$ be a connected graph with $n\geq 11$ vertices and cyclomatic number 6. Then $R(G)\leq \frac{1}{2}n- \frac{5-2\sqrt{6}}{6}$, with equality if and only if $G\in \Omega_3(n)$.
\end{enumerate}

\end{thm}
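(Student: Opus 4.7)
The plan is to show that an extremal $G$ must satisfy $\Delta(G)=3$ and $n_1(G)=0$, so that by Corollary \ref{rem2} we have $G\in\Upsilon_1(n)$ in the $\gamma=5$ case (respectively $G\in\Upsilon_3(n)$ in the $\gamma=6$ case), and then to finish by a direct algebraic optimisation over the compatible edge-type distributions within that set.

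First, I would use Lemma \ref{th0} to reduce to $\Delta(G)=3$. If $\Delta(G)\geq 4$, then applying Transformations I, II, IV, V at a high-degree vertex (Transformations I and II to straighten out any attached path, then Transformations IV and V to shed an edge from a vertex of degree $\geq 4$ onto a neighbouring degree-$2$ chain) produces a graph $G'$ with the same $n$ and $\gamma$ but strictly larger $R$, contradicting extremality. This is the main technical obstacle: Transformations IV and V have quite specific local-degree hypotheses, so one has to partition into cases according to the local structure at a chosen high-degree vertex and verify that in each case at least one of the five transformations applies.

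Next I would rule out pendants. If $n_1(G)\geq 1$, then by Corollary \ref{rem2}(2) (respectively (4)) we have $G\in\Upsilon_2(n)$ (respectively $\Upsilon_4(n)$). An algebraic optimisation analogous to the one below, using the handshake-type identities $m_{1,2}+m_{1,3}=n_1$, $2m_{3,3}+m_{2,3}+m_{1,3}=3n_3$, $2m_{2,2}+m_{2,3}+m_{1,2}=2n_2$, shows that over this set $R$ is maximised on $\Omega_2(n)$ (respectively $\Omega_4(n)$), giving the value $\frac{1}{2}n-\frac{7-(\sqrt{6}+3\sqrt{2})}{6}$. Since $2+\sqrt{6}>3\sqrt{2}$ (equivalent to $\sqrt{6}>2$), we have $7-\sqrt{6}-3\sqrt{2}>5-2\sqrt{6}$, so this value is strictly below the target bound and no extremal graph has a pendant. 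Hence $G\in\Upsilon_1(n)$ (respectively $\Upsilon_3(n)$) by Corollary \ref{rem2}(1) (respectively (3)).

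Finally, working inside $\Upsilon_1(n)$, so $n_3=8$ and $n_2=n-8$, the identities $2m_{3,3}+m_{2,3}=3n_3=24$ and $2m_{2,2}+m_{2,3}=2n_2=2n-16$ force $m_{2,3}$ to be even and determine $m_{3,3}$, $m_{2,2}$ from it. Writing $m_{2,3}=2t$ gives $m_{3,3}=12-t$ and $m_{2,2}=n-8-t$, so
\[
R(G)=\frac{m_{2,2}}{2}+\frac{m_{2,3}}{\sqrt{6}}+\frac{m_{3,3}}{3}=\frac{n}{2}+t\cdot\frac{2\sqrt{6}-5}{6}.
\]
Since $2\sqrt{6}<5$, this is strictly decreasing in $t$; connectedness together with $n_2\geq 1$ and the nonempty class $n_3=8$ forces $m_{2,3}\geq 1$, hence $t\geq 1$ (as $m_{2,3}$ is even). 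The minimum $t=1$ corresponds precisely to $m_{3,3}=11$, $m_{2,3}=2$, $m_{2,2}=n-9$, i.e.\ to $G\in\Omega_1(n)$, and yields $R(G)=\frac{1}{2}n-\frac{5-2\sqrt{6}}{6}$. The $\gamma=6$ case is structurally identical: within $\Upsilon_3(n)$ the handshake identities give $m_{3,3}=15-t$ and $m_{2,2}=n-10-t$ under the same substitution, producing exactly the same formula $R(G)=\frac{n}{2}+t\cdot\frac{2\sqrt{6}-5}{6}$; the same minimisation $t=1$ characterises $\Omega_3(n)$ and gives the identical bound.
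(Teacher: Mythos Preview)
Your overall plan matches the paper's: use the transformations of Lemma~\ref{th0} to push an extremal graph into $\Upsilon_1(n)$ (resp.\ $\Upsilon_3(n)$), and then optimise inside that class. The paper treats the two cases $n_1=0$ and $n_1\ge 1$ directly: for $n_1=0$ it applies Transformations~III and~IV (Lemma~\ref{th0}(3),(4)) to land in $\Omega_1(n)$; for $n_1\ge 1$ it applies Transformations~I--IV to land in $\Omega_2(n)$ and then compares the two explicit values. Your final step is a genuine and pleasant alternative: instead of invoking Transformation~III to compare different placements of the degree-$2$ vertices, you parametrise the edge-type vector by $t=m_{2,3}/2$ and read off that $R$ is linear and decreasing in $t$. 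This makes the equality characterisation $t=1\Leftrightarrow G\in\Omega_1(n)$ completely transparent, and the same one-line computation handles $\gamma=6$.

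Two small points to tighten. First, your pendant step only covers $n_1=1$: Corollary~\ref{rem2}(2) and (4) place $G$ in $\Upsilon_2(n)$ or $\Upsilon_4(n)$ only under that hypothesis, yet after forcing $\Delta=3$ one can still have $n_1\ge 2$ (since $n_1=n_3-2(k-1)$ when $\Delta=3$). The fix is easy and is exactly what the paper does implicitly: apply Transformations~I and~II again at this stage to merge pendant paths until $n_1\le 1$, each application strictly increasing $R$. Second, Transformation~V is not needed here; the paper uses only III and IV in the $n_1=0$ branch and I, II, III, IV in the $n_1\ge 1$ branch, reserving V for the second-maximum Theorem~\ref{tth3}. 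Dropping V from your reduction simplifies the case analysis you flagged as the main obstacle.
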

\begin{proof}
\begin{enumerate}
\item If $G\in \Omega_1(n)$, then $R(G)=\frac{1}{2}n- \frac{5-2\sqrt{6}}{6}$, and if  $n_1(G)=0$, then the result follows from Transformations III, IV and Lemma \ref{th0}(3,4). If $n_1(G)\geq 1$, then by  Transformations I, II, III, IV, we have $G^\prime\in\Omega_2(n)$  and by Lemma  \ref{th0}(1,2,3,4), $R(G)\leq R(G^\prime)\leq \frac{1}{2}n- \frac{7-(\sqrt{6}+3\sqrt{2})}{6}<\frac{1}{2}n- \frac{5-2\sqrt{6}}{6}$.

\item If $G\in \Omega_3(n)$, then $R(G)=\frac{1}{2}n- \frac{5-2\sqrt{6}}{6}$, and if $n_1(G)=0$, then Transformations III, IV and Lemma \ref{th0}(3,4) gives the result. Finally, if $n_1(G)\geq 1$, then by  Transformations I, II, III, IV, it follows that $G^\prime\in\Omega_4(n)$  and by Lemma  \ref{th0}(1,2,3,4),  $R(G)\leq R(G^\prime)\leq \frac{1}{2}n- \frac{7-(\sqrt{6}+3\sqrt{2})}{6}<\frac{1}{2}n- \frac{5-2\sqrt{6}}{6}$.
\end{enumerate}
\end{proof}

\begin{rem}
\begin{enumerate}
\item Let $G$ be a connected graph with $n= 8$ vertices and cyclomatic number 5. Then $R(G)\leq 4$, with equality if and only if $G$ is a $3$-regular graph.

\item Let $G$ be a connected graph with $n= 10$ vertices and cyclomatic number 6. Then $R(G)\leq 5$, with equality if and only if $G$ is a $3$-regular graph.
\end{enumerate}
\end{rem}


For positive numbers $k\geq3$, we define:
\begin{eqnarray*}
\Lambda_k^1(n)&:=&\{G: m_{3,3}=3k-4, m_{2,3}=2, m_{2,2}=n-(2k-1)\}\\
\Gamma_k^1(n)&:=&\{m_{3,3}=3k-2, m_{2,3}=1, m_{1,2}=1, m_{2,2}=n-(2k+1)\}.
\end{eqnarray*}
If $G_1\in \Lambda_k^1(n)$ and $G_2\in\Gamma_k^1(n)$, then $R(G_1)=\frac{1}{2}n- \frac{5-2\sqrt{6}}{6}$ and $R(G_2)=\frac{1}{2}n- \frac{7-(\sqrt{6}+3\sqrt{2})}{6}$.

A similar proof as Theorem \ref{tth1} proves the following general result:

\begin{thm}\label{basth1}
Let  $G$ be a connected graph with $n$ vertices and cyclomatic number $k$, where  $k\geq3$ is a positive ineger.
\begin{enumerate}
\item If $n\geq 2k-1$, then $R(G)\leq \frac{1}{2}n- \frac{5-2\sqrt{6}}{6}$, with equality if and only if $G\in \Lambda_k^1(n)$.
\item If $n= 2k-2$, then $R(G)\leq \frac{1}{2}n$, with equality if and only if $G$ is a $3$-regular graph.
\end{enumerate}
\end{thm}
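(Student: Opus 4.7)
The plan is to follow the template of Theorem~\ref{tth1} verbatim, with the role of $\gamma \in \{5,6\}$ and the classes $\Omega_1,\Omega_3$ (respectively $\Omega_2,\Omega_4$) played by $\gamma = k$ and $\Lambda_k^1(n)$ (respectively $\Gamma_k^1(n)$). Before any reduction, I would do a direct bookkeeping computation to confirm the two target values. For $G \in \Lambda_k^1(n)$,
\[
R(G) = \frac{3k-4}{3} + \frac{2}{\sqrt{6}} + \frac{n-2k+1}{2} = \frac{n}{2} - \frac{5-2\sqrt{6}}{6},
\]
and the analogous sum for $G \in \Gamma_k^1(n)$ reduces to $\frac{n}{2} - \frac{7-(\sqrt{6}+3\sqrt{2})}{6}$; a numerical check shows $\frac{5-2\sqrt{6}}{6} < \frac{7-(\sqrt{6}+3\sqrt{2})}{6}$, so $\Lambda_k^1(n)$ strictly dominates $\Gamma_k^1(n)$.

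Next I would extend the Corollary~\ref{cor1}-style identities to arbitrary $\gamma = k$: if $\Delta(G)=3$, then $n_1(G) = n_3(G) - (2k-2)$ and $n_2(G) = n + 2k - 2 - 2n_3(G)$. Together with the $m_{3,3}$ bounds from the lemma preceding the definitions of $\Omega_i(n)$, namely $m_{3,3} \le n_3 - 2 + k$ when $n_1=0$ and $m_{3,3} \le n_3 - 1 + k$ when $n_1 \ge 1$, these identities leave only two degree distributions compatible with maximizing $m_{3,3}$: the zero-pendant distribution $(n_1,n_2,n_3) = (0,\,n-2k+2,\,2k-2)$ underlying $\Lambda_k^1(n)$, and the one-pendant distribution $(1,\,n-2k,\,2k-1)$ underlying $\Gamma_k^1(n)$.

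For part~(1), start with an arbitrary extremal $G$. First reduce $\Delta(G)$ down to $3$: as long as $\Delta(G) \ge 4$, prepare the local structure around a max-degree vertex via Transformations~I and~II (Lemma~\ref{th0}(1,2)), then apply Transformation~IV (Lemma~\ref{th0}(4)); each step is $R$-non-decreasing, and the equality clause of Lemma~\ref{th0}(4) pins down the degree pattern forced into $\Lambda_k^1(n)$. Once $\Delta(G)=3$, use Transformation~I to bring $n_1$ into $\{0,1\}$ and Transformation~III (Lemma~\ref{th0}(3a),(3c)) to rearrange the remaining subdivision vertices until the edge-type vector $(m_{3,3},m_{2,3},m_{2,2})$ (or $(m_{3,3},m_{2,3},m_{1,2},m_{2,2})$) matches the canonical class. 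Combining the chain with the strict inequality $\frac{n}{2} - \frac{7-(\sqrt{6}+3\sqrt{2})}{6} < \frac{n}{2} - \frac{5-2\sqrt{6}}{6}$ closes the bound and the characterization of equality. For part~(2), when $n = 2k-2$ the identity forces $n_2 = 0$ in the $n_1 = 0$ case so $G$ must be $3$-regular; the class $\Lambda_k^1(n)$ is empty (it would need $m_{2,2} = -1$) and the $n_1 \ge 1$ case is infeasible ($n_2 \le -2$), so any $G$ with $\Delta \ge 4$ strictly reduces via the same transformation chain to the $3$-regular graph, whose value is $\frac{3(k-1)}{3} = k-1 = \frac{n}{2}$.

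The main obstacle is the step ``prepare the local structure via Transformations~I and~II, then apply~IV'' when $\Delta(G) \ge 4$: one must verify that the precise local degree requirements of Transformation~IV (the triple $v_1,v_2,v_3$ with prescribed degrees adjacent to $v_4$) can always be secured around a maximum-degree vertex, which will need a short case analysis on its neighborhood, distinguishing when the required low-degree path $v_1v_2v_3$ already exists and when it must first be created by rerouting pendant paths through Transformations~I and~II. Once this local availability is established, preservation of connectivity and of the cyclomatic number along the whole chain is automatic since each of the five transformations leaves $|E(G)| - |V(G)|$ fixed.
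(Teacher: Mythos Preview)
Your proposal is correct and follows exactly the approach the paper intends: the paper's entire proof of Theorem~\ref{basth1} is the single sentence ``A similar proof as Theorem~\ref{tth1} proves the following general result,'' and you have faithfully unpacked that template, replacing $\gamma\in\{5,6\}$ and $\Omega_i(n)$ by general $k$ and $\Lambda_k^1(n),\Gamma_k^1(n)$. Your added detail for part~(2) (the degree-count forcing $3$-regularity when $n=2k-2$) and your flagging of the local-availability check for Transformation~IV go beyond what the paper writes out, but are precisely the points the reader would need to verify.
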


Let $n$ be a positive number, $\Upsilon_5(n):=\{G: n_1=0,n_4=1,n_3=6,n_2=n-7\}$ and $\Upsilon_6(n):=\{G: n_1=0,n_4=1,n_3=8,n_2=n-9\}$. Define :
\begin{eqnarray*}
\Omega_5(n)&:=&\{G\in \Upsilon_3(n): m_{4,3}=4,  m_{3,3}=6, m_{2,3}=2, m_{2,2}=n-8\},\\
\Omega_6(n)&:=&\{G\in \Upsilon_1(n): m_{3,3}=10, m_{2,3}=4, m_{2,2}=n-10\},\\
\Omega_7(n)&:=&\{G\in \Upsilon_3(n): m_{4,3}=4,  m_{3,3}=9, m_{2,3}=2, m_{2,2}=n-10\},\\
\Omega_8(n)&:=&\{G\in \Upsilon_3(n): m_{3,3}=13, m_{2,3}=4, m_{2,2}=n-12\}.
\end{eqnarray*}
If $G_i\in \Omega_5(n)$ for $1\leq i\leq 4$,  then $R(G_1)=\frac{1}{2}n- \frac{6-(2\sqrt{3}+\sqrt{6})}{3}$,$R(G_2)= \frac{1}{2}n- \frac{5-2\sqrt{6}}{3}$, $R(G_3)=\frac{1}{2}n- \frac{6-(2\sqrt{3}+\sqrt{6})}{3}$ and $R(G_4)= \frac{1}{2}n- \frac{5-2\sqrt{6}}{3}$.

\begin{thm}\label{tth3}
The following hold:
\begin{enumerate}
\item Let $G$ be a connected graph with $n\geq 9$ vertices and cyclomatic number 5. If $G\not\in \Omega_1(n)$, then  $R(G)\leq \frac{1}{2}n- \frac{6-(2\sqrt{3}+\sqrt{6})}{3}$, with equality if and only if $G\in \Omega_5(n)$.
\item Let $G$ be a connected graph with $n\geq 11$ vertices and cyclomatic number 6. If $G\not\in \Omega_3(n)$, then  $R(G)\leq \frac{1}{2}n- \frac{6-(2\sqrt{3}+\sqrt{6})}{3}$, with equality if and only if $G\in \Omega_7(n)$.
\end{enumerate}

\end{thm}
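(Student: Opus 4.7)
The plan is to mirror the proof strategy of Theorem~\ref{tth1}, using Transformations~I--V in the $R$-increasing direction (and the $R$-decreasing direction of Transformation~V as a rigidity obstruction) to push $G$ toward a canonical extremal form, now with the first-maximum configuration $\Omega_1(n)$ (resp.\ $\Omega_3(n)$) excluded by hypothesis.

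First I would apply Transformations~I, II, and III(a)--(b) together with Lemma~\ref{th0}(1--3) to eliminate pendants and consolidate short degree-$2$ chains; each such move strictly increases $R$. Then Transformation~IV with Lemma~\ref{th0}(4) reduces the maximum degree: every application weakly increases $R$, with strict increase except in the rigid local configuration specified by the equality clause. Iterating yields a graph $G^\ast$ with $R(G^\ast)\geq R(G)$ that falls into one of two cases: (A) $\Delta(G^\ast)=3$, or (B) $\Delta(G^\ast)=4$ with a unique degree-$4$ vertex whose four neighbors all have degree~$3$.

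In case~(A), Corollary~\ref{rem2} forces $G^\ast\in \Upsilon_1(n)\cup\Upsilon_2(n)$ (resp.\ $\Upsilon_3(n)\cup\Upsilon_4(n)$). Since $G^\ast\notin\Omega_1(n)$ (resp.\ $\Omega_3(n)$), enumerating the admissible $m_{i,j}$-partitions using the bound $m_{i,i}\le n_i-2+\gamma$ and the edge-count identity $m=n-1+\gamma$ shows $R(G^\ast)\le R(\Omega_6(n))=\tfrac{n}{2}-\tfrac{5-2\sqrt{6}}{3}$ (resp.\ $R(\Omega_8(n))$). The algebraic inequality $(2\sqrt{3}-\sqrt{6})^2=18-12\sqrt{2}>1$ then gives $R(\Omega_6(n))<\tfrac{n}{2}-\tfrac{6-(2\sqrt{3}+\sqrt{6})}{3}$, so case~(A) never attains the claimed bound.

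In case~(B), combining Lemma~\ref{th0}(4)'s equality clause with the degree-count formulas forces $(n_4,n_3,n_2)=(1,6,n-7)$ (resp.\ $(1,8,n-9)$), placing $G^\ast$ in $\Upsilon_5(n)$ (resp.\ $\Upsilon_6(n)$). To pin down the $m_{i,j}$-partition, I would use Transformation~V as a rigidity obstruction together with the $m_{i,i}$ bound: whenever $G^\ast$ contained the right-hand-side configuration of~V, the left-hand side would give strictly larger $R$, contradicting maximality; this forces $(m_{4,3},m_{3,3},m_{2,3},m_{2,2})$ to be exactly the tuple defining $\Omega_5(n)$ (resp.\ $\Omega_7(n)$), after which a direct edge-sum computation recovers the stated value of $R$. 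The main obstacle is this final combinatorial pinning: carefully verifying that no alternative $m_{i,j}$-partition inside $\Upsilon_5(n)$ (resp.\ $\Upsilon_6(n)$) slips past the transformation-based rigidity arguments, which likely requires a subcase analysis of how the four degree-$3$ neighbors of the degree-$4$ vertex interconnect among themselves and with the remaining degree-$3$ vertices.
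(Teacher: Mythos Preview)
Your overall strategy (push $G$ toward a canonical form via the $R$-increasing transformations, then identify the second-best configuration) is the paper's strategy, but your case organisation has a real gap. You iterate Transformations~I--IV and then assert in case~(A) that $G^\ast\notin\Omega_1(n)$; the hypothesis, however, only gives $G\notin\Omega_1(n)$. Transformation~III in the $R$-increasing direction $G_1\to G_2$ raises $m_{3,3}$ by one, so repeatedly applying it to a graph in $\Upsilon_1(n)\setminus\Omega_1(n)$ eventually lands in $\Omega_1(n)$. Likewise, applying Transformation~IV to a graph with a single degree-$4$ vertex whose neighbours all have degree~$3$ yields a graph with $\Delta=3$, again possibly in $\Omega_1(n)$; and since the equality clause of Lemma~\ref{th0}(4) requires $d_3=d_4=4$, that configuration is \emph{not} a terminal state of your iteration, contrary to your description of case~(B). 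In either situation you obtain only $R(G)<R(\Omega_1(n))$, which does not imply the claimed second-maximum bound.

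The paper avoids this by casing on invariants of $G$ itself rather than of a transformed graph: it treats separately $n_1(G)\geq1$ (Transformations~I--IV drive $G$ to $\Omega_2(n)$, whose value is already below the target), $n_1(G)=0$ with $\Delta(G)=3$ (so $G\in\Upsilon_1(n)\setminus\Omega_1(n)$, whence $m_{3,3}(G)\leq10$ and $R(G)\leq R(\Omega_6(n))$ directly), and $n_1(G)=0$ with $\Delta(G)\geq4$ (where~III,~IV,~V together bound $R(G)$ by $R(\Omega_5(n))$). The exclusion $G\notin\Omega_1(n)$ is invoked only in the middle case, where it applies to $G$ itself and no circularity arises.
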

\begin{proof}
\begin{enumerate}
\item  If $n_1(G)=0$ and $\Delta(G)=3$, then by Transformation III and Lemma \ref{th0}(3), $R(G)\leq \frac{1}{2}n- \frac{5-2\sqrt{6}}{3}<\frac{1}{2}n- \frac{6-(2\sqrt{3}+\sqrt{6})}{3}$.
If $n_1(G)=0$ and $\Delta(G)\geq4$, then by Transformations III, IV, V and Lemma \ref{th0}(3,4,5), $R(G)\leq \frac{1}{2}n- \frac{6-(2\sqrt{3}+\sqrt{6})}{3}$, with equality if and only if $G\in \Omega_5(n)$.
If $n_1(G)\geq 1$, then by  Transformations I, II, III, IV, and Lemma  \ref{th0}(1,2,3,4),  $R(G)\leq \frac{1}{2}n- \frac{7-(\sqrt{6}+3\sqrt{2})}{6}<\frac{1}{2}n- \frac{6-(2\sqrt{3}+\sqrt{6})}{3}$.
\item  If $n_1(G)=0$ and $\Delta(G)=3$, then by Transformation III and Lemma \ref{th0}(3), $R(G)\leq \frac{1}{2}n- \frac{5-2\sqrt{6}}{3}<\frac{1}{2}n- \frac{6-(2\sqrt{3}+\sqrt{6})}{3}$.
If $n_1(G)=0$ and $\Delta(G)\geq4$, then by Transformations III, IV, V and Lemma \ref{th0}(3,4,5), $R(G)\leq \frac{1}{2}n- \frac{6-(2\sqrt{3}+\sqrt{6})}{3}$, with equality if and only if $G\in \Omega_5(n)$.
If $n_1(G)\geq 1$, then by  Transformations I, II, III, IV, and Lemma  \ref{th0}(1,2,3,4), $R(G)\leq \frac{1}{2}n- \frac{7-(\sqrt{6}+3\sqrt{2})}{6}<\frac{1}{2}n- \frac{6-(2\sqrt{3}+\sqrt{6})}{3}$.
\end{enumerate}

\end{proof}
By a simple calculation one can easily see that  the Theorem \ref{tth3}(1) holds for $n=8$ and Theorem \ref{tth3}(2) holds for $n=10$. On the other hand,  Theorems \ref{tth1} and \ref{tth3} imply the following result:

 \begin{cor}
 The following hold:
 \begin{enumerate}
 \item Suppose $n\geq9$. The connected graphs with cyclomatic number 5 in the sets $\Omega_1(n)$ and $\Omega_5(n)$ have the first and second maximum Randi\'c index among all $n$-vertex connected graphs with cyclomatic number 5, respectively.
 \item Suppose $n\geq11$. The connected graphs with cyclomatic number 6 in the sets $\Omega_7(n)$ and $\Omega_8(n)$ have the first and second maximum Randi\'c index among all $n$-vertex connected graphs with cyclomatic number 6, respectively.
 \end{enumerate}

 \end{cor}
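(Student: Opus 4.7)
The plan is to obtain this corollary as a direct consequence of Theorems~\ref{tth1} and \ref{tth3}, which were stated precisely in the ``peel one layer'' form needed: the first gives the global maximum of $R$ and its extremal class among $n$-vertex connected graphs with cyclomatic number $5$ (resp.\ $6$), and the second gives the next-best value and its extremal class once the first-maximum graphs have been excluded. No new transformations or case analysis are required beyond invoking these two theorems.

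For part~(1), I would first apply Theorem~\ref{tth1}(1): for every $n$-vertex connected graph $G$ with cyclomatic number $5$ and $n\ge 9$, one has $R(G)\le \tfrac{1}{2}n-\tfrac{5-2\sqrt{6}}{6}$ with equality iff $G\in\Omega_1(n)$, identifying $\Omega_1(n)$ as the first-maximum class. I would then restrict to $G\notin\Omega_1(n)$ and apply Theorem~\ref{tth3}(1) to conclude $R(G)\le \tfrac{1}{2}n-\tfrac{6-(2\sqrt{3}+\sqrt{6})}{3}$ with equality iff $G\in\Omega_5(n)$, so $\Omega_5(n)$ is the second-maximum class. Part~(2) proceeds verbatim: Theorem~\ref{tth1}(2) pins down the first-maximum class for cyclomatic number $6$ (with value $\tfrac{1}{2}n-\tfrac{5-2\sqrt{6}}{6}$) and Theorem~\ref{tth3}(2) pins down the second-maximum class (with value $\tfrac{1}{2}n-\tfrac{6-(2\sqrt{3}+\sqrt{6})}{3}$).

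The only fact that still requires verification is the strict inequality between the two candidate maximum values, namely $\tfrac{5-2\sqrt{6}}{6}<\tfrac{6-(2\sqrt{3}+\sqrt{6})}{3}$. Clearing denominators and cancelling $-2\sqrt{6}$ on both sides, this reduces to $4\sqrt{3}<7$, i.e.\ $48<49$, which is immediate. Hence ``first'' and ``second'' in the corollary are correctly ordered, and the corollary follows. I do not foresee any genuine obstacle: the substantive combinatorial work has already been carried out in Theorems~\ref{tth1} and \ref{tth3}, and this corollary is essentially a bookkeeping statement assembling those two results into a ranking of the top two Randi\'c values for each of the cyclomatic numbers $5$ and $6$.
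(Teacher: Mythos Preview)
Your approach is correct and is exactly the paper's own: the text immediately preceding the corollary says that ``Theorems~\ref{tth1} and \ref{tth3} imply the following result,'' with no further argument, and your proposal does precisely this (adding the small numerical check that the two extremal values are strictly ordered, which is a welcome detail).

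One caveat worth flagging: in part~(2) as printed, the classes named are $\Omega_7(n)$ and $\Omega_8(n)$, but Theorem~\ref{tth1}(2) gives the first-maximum class as $\Omega_3(n)$ and Theorem~\ref{tth3}(2) gives the second-maximum class as $\Omega_7(n)$. Your write-up glides past this by saying ``Part~(2) proceeds verbatim'' without naming the classes; strictly speaking, the argument you outline proves the first and second maxima are attained on $\Omega_3(n)$ and $\Omega_7(n)$, respectively, so the labels in the stated corollary appear to be typographical slips rather than a gap in your reasoning.
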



Suppose $k\geq4$ is a positive integer. Define:
\begin{eqnarray*}
\Lambda_k^2(n)&:=&\{G: m_{3,3}=3k-5, m_{2,3}=4, m_{2,2}=n-2k\}\\
\Gamma_k^2(n)&:=&\{ m_{4,3}=4,  m_{3,3}=3k-9, m_{2,3}=2, m_{2,2}=n-(2k-2)\}.
\end{eqnarray*}
If $H_1\in\Lambda_k^2(n)$ and $H_2\in \Gamma_k^2(n)$, then $R(H_1)= \frac{1}{2}n- \frac{5-2\sqrt{6}}{3}$ and $R(H_2)=\frac{1}{2}n- \frac{6-(2\sqrt{3}+\sqrt{6})}{3}$.

\begin{thm}\label{basth2}
Let $G$ be a connected graph with $n$ vertices and cyclomatic number $k\geq4$.
\begin{enumerate}
\item If $n\geq 2k-1$ and  $G\not\in \Lambda_k^1(n)$, then  $R(G)\leq \frac{1}{2}n- \frac{6-(2\sqrt{3}+\sqrt{6})}{3}$, with equality if and only if $G\in\Gamma_k^2(n)$.
\item If $n=2k-2$ and  $G$ is not a $3$-regular graph, then  $R(G)\leq \frac{1}{2}n- \frac{6-(2\sqrt{3}+\sqrt{6})}{3}$, with equality if and only if $G\in\Gamma_k^2(n)$.
\end{enumerate}
\end{thm}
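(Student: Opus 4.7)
\emph{Overall strategy.} The plan is to adapt the three-branch proof of Theorem \ref{tth3} to general $k$. First I would record the strict ordering
\[\frac{5-2\sqrt{6}}{6}<\frac{6-(2\sqrt{3}+\sqrt{6})}{3}<\frac{5-2\sqrt{6}}{3}<\frac{7-(\sqrt{6}+3\sqrt{2})}{6}\]
by direct numerical comparison. Since the first maximum $\frac{1}{2}n-\frac{5-2\sqrt{6}}{6}$ is attained precisely on $\Lambda_k^1(n)$ by Theorem \ref{basth1} and is forbidden by hypothesis, it suffices to show that every remaining $G$ is bounded above by $\frac{1}{2}n-\frac{6-(2\sqrt{3}+\sqrt{6})}{3}$, with equality if and only if $G\in\Gamma_k^2(n)$.

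\emph{The three cases.} I would split on $(n_1(G),\Delta(G))$. If $n_1(G)=0$ and $\Delta(G)=3$, iterated applications of Transformation III (Lemma \ref{th0}(3)) force the configuration of $(m_{3,3},m_{2,3},m_{2,2})$ into that of $\Lambda_k^2(n)$, giving $R(G)\le \frac{1}{2}n-\frac{5-2\sqrt{6}}{3}$, strictly below the target. If $n_1(G)=0$ and $\Delta(G)\ge 4$, a combination of Transformations III, IV, V (Lemma \ref{th0}(3--5)) shrinks the unique high-degree vertex down to degree $4$ with four degree-$3$ neighbors and eliminates every unfavourable local pattern, driving $G$ into $\Gamma_k^2(n)$ and matching the claimed bound. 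If $n_1(G)\ge 1$, Transformations I and II (Lemma \ref{th0}(1,2)) collapse pendant paths to a single pendant edge, and Transformations III, IV then regularize the rest, yielding a member of $\Gamma_k^1(n)$ with Randi\'c index $\frac{1}{2}n-\frac{7-(\sqrt{6}+3\sqrt{2})}{6}$, again strictly below the target. Part (2) runs the same analysis with the boundary equality $n=2k-2$ forcing $m_{2,2}=0$ in $\Gamma_k^2(n)$; the ordering of the four constants, and hence the conclusion, is unaffected.

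\emph{Main obstacle.} The delicate step will be the equality analysis in the $\Delta(G)\ge 4$ branch. Lemma \ref{th0}(4) identifies a very restrictive local configuration at the degree-$4$ vertex $v_4$ (two degree-$3$ and two degree-$4$ neighbors, together with precise requirements on $v_1$ and $v_3$), and one must verify that the iterated transformation process terminates at exactly this rigid local pattern, distributed correctly across the four neighbors so that the global edge multiplicities $m_{4,3}=4$, $m_{3,3}=3k-9$, $m_{2,3}=2$, $m_{2,2}=n-(2k-2)$ of $\Gamma_k^2(n)$ emerge, rather than at some nearby graph with coincident Randi\'c index but different structure. Once this bookkeeping is completed, the case comparisons reduce to the numerical ordering recorded at the outset.
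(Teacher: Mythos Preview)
Your proposal is correct and takes essentially the same approach as the paper. The paper's own proof is the single sentence ``The proof is similar to the proof of Theorem \ref{tth1},'' and what you have written is precisely the expanded version of that remark, following the three-branch casework of Theorem \ref{tth3} (the direct analogue for the second maximum) together with the numerical ordering of the four constants; your identification of the equality bookkeeping in the $\Delta(G)\ge 4$ branch as the one place requiring care is also apt, and is exactly the detail the paper suppresses.
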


\begin{proof}
The proof is similar to the proof of Theorem \ref{tth1}.
\end{proof}

We end this paper by the following result that follows from Theorems \ref{basth1} and \ref{basth2}.

\begin{thm}
Let $G$ be a connected graph with $n$ vertices and cyclomatic number $k\geq4$.
\begin{enumerate}
\item If $n\geq 2k-1$, then  the connected graphs with cyclomatic number $k$ in the sets  $\Lambda_k^1(n)$ and $\Gamma_k^2(n)$ have the first and second maximum Randi\'c index among all $n$-vertices connected graphs with cyclomatic number $k$, respectively.
    
\item If $n=2k-2$, then  the $3$-regular connected graphs and the connected graphs in the  set  $\Gamma_k^2(n)$ with cyclomatic number $k$ have the first and second maximum Randi\'c index among all $n$-vertices connected graphs with cyclomatic number $k$, respectively.
\end{enumerate}
\end{thm}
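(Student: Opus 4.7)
The plan is to deduce this result directly from Theorems~\ref{basth1} and~\ref{basth2}, which already identify, for each regime of $n$, both the first maximum value of $R(G)$ together with its extremal family and the second extremal family (under the hypothesis that $G$ avoids the first family). All that remains is to confirm the strict ordering between the two candidate values and to repackage the conclusion in the \emph{first/second maximum} language of the statement.

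For part~(1), assuming $n \ge 2k-1$, I would first apply Theorem~\ref{basth1}(1) to obtain $R(G) \le \tfrac{1}{2}n - \tfrac{5-2\sqrt{6}}{6}$ with equality precisely on $\Lambda_k^1(n)$, so these graphs attain the first maximum. Then for any $G \notin \Lambda_k^1(n)$, Theorem~\ref{basth2}(1) gives $R(G) \le \tfrac{1}{2}n - \tfrac{6-(2\sqrt{3}+\sqrt{6})}{3}$, with equality precisely on $\Gamma_k^2(n)$. To see that these really constitute a strict \emph{second} maximum rather than tying the first, I need the numerical inequality $\tfrac{6-(2\sqrt{3}+\sqrt{6})}{3} > \tfrac{5-2\sqrt{6}}{6}$, which after clearing denominators reduces to $4\sqrt{3} < 7$, equivalently $48 < 49$.

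For part~(2) with $n = 2k-2$, Theorem~\ref{basth1}(2) identifies the $3$-regular graphs on $n$ vertices as the unique first-maximum family with value $\tfrac{1}{2}n$, and Theorem~\ref{basth2}(2) identifies $\Gamma_k^2(n)$ as the unique extremal family among non-$3$-regular graphs, with value $\tfrac{1}{2}n - \tfrac{6-(2\sqrt{3}+\sqrt{6})}{3}$. Since $2\sqrt{3} + \sqrt{6} < 6$ (numerically about $5.913$), this value is strictly less than $\tfrac{1}{2}n$, so $\Gamma_k^2(n)$ supplies the second maximum.

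There is essentially no substantive obstacle: the theorem is a pure corollary, and the only verifications are the two elementary radical inequalities above. The main conceptual content (the graph transformations of Section~2, the degree counting via Corollaries~\ref{cor1}--\ref{rem2}, and the structural classification leading to the $\Lambda_k^i(n)$ and $\Gamma_k^i(n)$ families) has already been absorbed into Theorems~\ref{basth1} and~\ref{basth2}, so this closing theorem is simply their formal combination.
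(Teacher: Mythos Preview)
Your proposal is correct and matches the paper's approach exactly: the paper gives no proof at all, simply asserting that the theorem ``follows from Theorems~\ref{basth1} and~\ref{basth2},'' and your write-up supplies precisely that deduction together with the elementary radical checks ($4\sqrt{3}<7$ and $2\sqrt{3}+\sqrt{6}<6$) needed to confirm the strict ordering of the two extremal values.
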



\bigskip

\noindent\textbf{Acknowledgement.} The research of the first author is partially supported by the university of Kashan under grant number 890190/5.

\bigskip


\begin{thebibliography}{20}




\bibitem{de1} T. Dehghan-Zadeh, A. R. Ashrafi and N. Habibi, Maximum and second maximum of Randi\'c index in the class of tricyclic graphs, \textit{MATCH Commun. Math. Comput. Chem.} \textbf{74} (2015) 137--144.

\bibitem{de2} T. Dehghan-Zadeh, A. R. Ashrafi and N. Habibi,  Tetracyclic graphs with extremal values of Randi\'c index, \textit{Boll. Unione Mat. Ital.} \textbf{8} (2015) 9--16.

\bibitem{de} H. Deng, S. Balachandran and S. Elumalai, Some tight bounds for the harmonic index and the variation of the Randi\'c index of graphs, \textit{Discrete Math.} \textbf{342} (7) (2019) 2060--2065.




\bibitem{Gh2} A. Ghalavand, A. R. Ashrafi and I. Gutman, Extremal graphs for the second multiplicative Zagreb index, \textit{Bull. Int. Math. Virtual Inst.} \textbf{8} (2) (2018) 369--383.

\bibitem{ga1} I. Gutman, A. Ghalavand, T. Dehghan-Zadeh and A. R. Ashrafi, Graphs with smallest forgotten index, \textit{Iranian J. Math. Chem.} {\bf 8} (3) (2017) 259--273.

\bibitem{g2} I. Gutman and B. Furtula (Editors), \textit{Recent Results in the Theory of Randic Index}, Mathematical Chememistry Monograph \textbf{6}, University of Kragujevac, Kragujevac, 2008.


\bibitem{l} X. Li and I. Gutman, \textit{Mathematical Aspects of Randic-Type Molecular Structure Descriptors}, Mathematical Chememistry Monograph \textbf{1}, University of Kragujevac, Kragujevac, 2006.

\bibitem{l2}  X. Li and Y. Shi, A survey on the Randi\'c index, \textit{MATCH Commun. Math. Comput.
Chem.} \textbf{59} (1)(2008) 127--156.


\bibitem{r} M. Randi\'c, On characterization of molecular branching, \textit{J. Am. Chem. Soc.} \textbf{97} (1975) 6609--6661.

\bibitem{s2} Y. Shi, Note on two generalizations of the Randi\'c index, \textit{Appl. Math. Comput.} \textbf{265} (2015) 1019--1025.

\bibitem{s} W. C. Shiu and L. Z. Zhang, The maximum Randi\'c index of chemical trees with $k$ pendants, \textit{Discrete Math.} \textbf{309} (2009) 4409--4416.



\end{thebibliography}
\end{document}